\documentclass[12pt]{article}
\usepackage{amsmath}
\usepackage{latexsym}
\usepackage{amssymb}
\usepackage{graphicx}
\usepackage[numbers,sort&compress]{natbib}
\usepackage{arydshln}

\newcommand{\E}{{\cal E}}

\newcommand{\F}{{\cal F}}

\newtheorem{theorem}{Theorem}[section]
\newtheorem{definition}{Definition}[section]
\newtheorem{lemma}[theorem]{Lemma}
\newtheorem{conjecture}[theorem]{Conjecture}

\newtheorem{construction}[theorem]{Construction}

\def\whitebox{{\hbox{\hskip 1pt
 \vrule height 6pt depth 1.5pt
 \lower 1.5pt\vbox to 7.5pt{\hrule width
    3.2pt\vfill\hrule width 3.2pt}%
 \vrule height 6pt depth 1.5pt
 \hskip 1pt } }}
\def\qed{\ifhmode\allowbreak\else\nobreak\fi\hfill\quad\nobreak
     \whitebox\medbreak}

\newcommand{\ignore}[1]{}
\begin{document}
\begin{sloppypar}
\title{ Constructions for supersaturation of eventown problems}

\author
{\small \  Xiaolei Niu$^{1}$, Yinghui Hang$^{2}$, and Haitao Cao$^{2}$\\
\small   \scriptsize 1     School of Mathematical Sciences, Jiangsu Second Normal University, Nanjing 210013, China\\
\small   \scriptsize 2   School of Mathematical Sciences, Ministry of Education Key Laboratory for NSLSCS,\\ \scriptsize Nanjing Normal University, Nanjing 210023, China.
}
\date{}
\maketitle

\begin{abstract}
In this paper, we study the supersaturation problems of eventown.
Given a family $\mathcal{A}$ of subsets of an $n$ element set, let op$(\mathcal{A})$ denote the number of distinct pairs $A,B\in \mathcal{A}$ for which $|A\cap B|$ is odd. We give extremal eventown constructions and show that for fixed $s\le2^{\lfloor \frac{n}{2} \rfloor}-2$, there exists a collection of $2^{\lfloor\frac{n}{2}\rfloor}+s$ even-sized subsets of an $n$ element set that contains exactly $s\cdot 2^{\lfloor \frac{n}{2} \rfloor-1}$ pairwise intersections of odd size. This extends the range of $s$ in a conjecture proposed by O'Neill from $2^{\lfloor \frac{n}{2} \rfloor}-2^{\lfloor \frac{n}{4} \rfloor}$ to $2^{\lfloor \frac{n}{2} \rfloor}-2$. We also give a construction using symmetric designs to prove that when $k$ is even and $4k-1$ is a prime power, there exists a collection of $2^{\lfloor\frac{4k-1}{2}\rfloor}+s$ even-sized subsets of a $4k-1$ element  set $\mathcal{A}_s$ with  $op(\mathcal{A}_s)=s \cdot 2^{{\lfloor\frac{4k-1}{2}\rfloor}-1}$,  $1\leq s\leq4k-1$.

\bigskip

\noindent {\textbf{Key words: }} intersecting set families, eventown,  symmetric design, supersaturation.
\end{abstract}

\section{Introduction}

Let $[n]:=\{1,2,\cdots, n\}$, $2^{[n]}$ denote the collection of all subsets of $[n]$,  $\binom{[n]}{k}$ denote the collection of all $k$-subsets of $[n]$, and $\bar{A}$ denote $[n]\setminus A$ where $A$ is a subset of $[n]$. In extremal set theory, given a finite family (i.e., a collection of subsets) $\mathcal{F}$ and a restriction on the intersection of two subsets, the restricted intersection problem asks for the maximum size of a subfamily $\mathcal{A}\subset \mathcal{F}$ such that any two different members of $\mathcal{A}$ satisfy the restricted intersection. Over the past few decades, several restricted intersection problems have been well researched, such as L-intersecting families and bounded symmetric differences, one can refer to \cite{MR0140419,MR0002086,MR3656338,MR3838458,MR0200179,MR1343352,MR1544925}.

Let $\mathcal{A}=\{A_1,A_2,\cdots,A_m\}$ be a family of subsets of $[n]$. We say $\mathcal{A}$ is an {\it eventown (oddtown) family} if all its sets have even(odd) size and $|A_i\cap A_j|$   is  even  for $1\le i\textless j\le m$.
Berlekamp \cite{MR0249303} and Graver \cite{MR0376400} determined independently that the maximum size of an eventown(resp. oddtown) family  is $2^{\lfloor \frac{n}{2} \rfloor}$(resp. $n$). Their results relied on linear algebra which has been widely used in extremal combinatorics. Numerous extensions and variants of the eventown and oddtown problems can be found in the literature \cite{MR0729786,MR0725068,JO2023,MR4429337,MR3755659,MR2191021,MR1451492,MR1773658,Wei2025DCC}.

In this paper, we study the supersaturation versions. O'Neill \cite{MR4564468} first initiated the study of supersaturation problem: if $\mathcal{A}\subset 2^{[n]}$ is a family of more than $2^{\lfloor \frac{n}{2} \rfloor}$ even-sized subsets(resp. $n$ odd-sized subsets), how many pairs of members in $\mathcal{A}$ would violate the intersecting restriction, i.e. the size of the intersection is odd? The \emph{odd pair number} for a given set family $\mathcal{A}$, denoted as  $op(\mathcal{A})$, is the number of pairs of distinct members $A,B\in \mathcal{A}$ such that $|A\cap B|$ is odd.

For supersaturation of eventown, O'Neill in \cite{MR4564468} constructed a family of even-sized subsets with size $2^{\frac{n}{2}}+s$ whose odd pair number is $s\cdot 2^{\frac{n}{2}-1}$ when $n$ is doubly even. He proved that is best possible when $s=1$ and $s=2$ by the linear algebra method.  Furthermore, he proposed the following conjectures.

\begin{conjecture}\label{COA}(\cite{MR4564468})
 Let $n\textgreater 1$ and fix $1\le s\le 2^{\lfloor \frac{n}{2} \rfloor}-2^{\lfloor \frac{n}{4} \rfloor}$. If $\mathcal{A}\subset 2^{[n]}$ consists of even-sized subsets with $|\mathcal{A}|\ge 2^{\lfloor \frac{n}{2} \rfloor}+s$, then op$(\mathcal{A})\ge s\cdot 2^{\lfloor \frac{n}{2} \rfloor-1}$.
\end{conjecture}

Recently, some progress was made in \cite{Wei2023OnSF}. For Conjecture \ref{COA}, the lower bound for sufficiently large $n$ and some range of $s$ was proved true by extremal graph theory.

\begin{theorem}\label{RA}(\cite{Wei2023OnSF})
Let $n$ be a large enough integer and fix $s\in[2^{\lfloor \frac{n}{8} \rfloor}/n]$. Any  family of even-sized subsets $\mathcal{A}\subset 2^{[n]}$  with $|\mathcal{A}|\ge 2^{\lfloor \frac{n}{2} \rfloor}+s$ satisfies $op(\mathcal{A})\ge s\cdot 2^{\lfloor \frac{n}{2} \rfloor-1}$.
\end{theorem}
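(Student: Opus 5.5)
The plan is to translate the problem into the language of $\mathbb{F}_2^n$ and combine an algebraic stability statement with a degree count in the ``odd-pair graph''. Identify each $A\in\mathcal{A}$ with its characteristic vector in $\mathbb{F}_2^n$ under the standard bilinear form $x\cdot y$. Even-sized sets are exactly the self-orthogonal vectors, so an eventown family is a set of vectors spanning a totally isotropic subspace, of dimension at most $m:=\lfloor n/2\rfloor$. Let $G$ be the graph on $\mathcal{A}$ whose edges are the odd pairs, so that $op(\mathcal{A})=e(G)$ and $\alpha(G)\le 2^m$. We may assume $|\mathcal{A}|=2^m+s$ exactly, and we argue by contradiction from $e(G)<s\cdot 2^{m-1}$.

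\textbf{Step 1 (cleaning).} Repeatedly delete a vertex of degree less than $2^{m-2}$, say. Suppose $t$ deletions are made before the process stops; the deleted vertices account for fewer than $t\,2^{m-2}$ edges. Any remaining set of size larger than $2^m$ has average degree forcing many edges. The aim is to show that the surviving family $\mathcal{A}'$ still has size at least $2^m-O(s)$ and spans very few odd pairs relative to its size.

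\textbf{Step 2 (algebraic stability).} This is the main obstacle. We must show that a family $\mathcal{B}$ of even vectors with $|\mathcal{B}|\ge 2^m-O(s)$ and $e(G[\mathcal{B}])=O(s\,2^{m})$ contains a subfamily $\mathcal{B}_0$ of size at least $2^m-O(sn)$ lying inside a single maximal totally isotropic subspace $W$, with $\dim W=m$. The first thing to try is a dependent-random-choice or K\H{o}v\'ari--S\'os--Tur\'an style argument. The graph $G[\mathcal{B}]$ is so sparse that it has an independent set $I$ of size close to $|\mathcal{B}|$, for instance by greedily removing one endpoint of each edge inside a large subset. Such an $I$ spans a totally isotropic subspace $W_0$ with $2^{\dim W_0}\ge |I|>2^{m-1}$, so $\dim W_0=m$ and $W_0=W$ is maximal. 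The hypothesis $s\le 2^{\lfloor n/8\rfloor}/n$ is what guarantees that the losses, roughly $sn$ vectors from the greedy step applied along a basis, stay below $2^{m-1}$ with room to spare. This is where $n$ large is used.

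\textbf{Step 3 (counting outside $W$).} For $n$ even we have $W^\perp=W$. For $n$ odd, $W^\perp\supset W$ with index $2$, and $W^\perp$ contains no additional even vectors beyond $W$ after accounting for the all-ones vector, which is handled separately. Hence every even vector $x\notin W$ has $x\cdot w=1$ for exactly half of the $w\in W$, namely $2^{m-1}$ of them. Since $|\mathcal{A}|=2^m+s$ while $|\mathcal{A}\cap W|\le 2^m$, at least $s+(2^m-|\mathcal{A}\cap W|)$ members of $\mathcal{A}$ lie outside $W$. Each such member $x$ has at least $2^{m-1}-(2^m-|\mathcal{A}\cap W|)$ neighbours in $\mathcal{A}\cap W$. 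Writing $d=2^m-|\mathcal{A}\cap W|$, this gives
\[
e(G)\ \ge\ (s+d)\bigl(2^{m-1}-d\bigr),
\]
which is at least $s\,2^{m-1}$ whenever $d\,2^{m-1}\ge d(s+d)$, that is, whenever $s+d\le 2^{m-1}$. Step 2 provides $d=O(sn)$, and $sn\le 2^{n/8}\ll 2^{m-1}$, so the inequality holds and contradicts $e(G)<s\,2^{m-1}$.

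The delicate part is therefore entirely Step 2: extracting a large independent set from a sparse odd-pair graph and certifying that it lies in one maximal isotropic subspace with only $O(sn)$ vertices lost. Steps 1 and 3 are routine bookkeeping once that stability estimate is in place.
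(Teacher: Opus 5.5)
There is a genuine gap, and it sits where you expect it: Step 2 carries the entire content of the theorem, and the mechanism you propose for it fails quantitatively. (The paper itself gives no proof of this statement; it only quotes it from \cite{Wei2023OnSF}, so your sketch has to stand on its own.)

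Under the contradiction hypothesis, $G$ has $2^m+s$ vertices and up to $s2^{m-1}-1$ edges, so its average degree is close to $s$. Discarding one endpoint of each odd pair only guarantees $2^m+s-e(G)$ survivors, which is $O(1)$ or negative once $s\ge 2$. Tur\'an's bound guarantees an independent set of size only about $(2^m+s)/(s+1)$, and this exceeds $2^{m-1}$ only for $s=1$, which is O'Neill's case. Disjoint cliques show that no argument using only the vertex and edge counts can do better. So nothing in your sketch produces an eventown subfamily of size above $2^{m-1}$, let alone one of size $2^m-O(sn)$. The ``greedy step applied along a basis'' with loss $O(sn)$ is never specified. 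The hypothesis $s\le 2^{\lfloor n/8\rfloor}/n$ is invoked only to bound a loss that was never derived.

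Step 1 does not rescue this, because it points the wrong way. Take the near-extremal configuration: a maximal isotropic $W$ plus $s$ extra vectors, as in this paper's constructions. Every vector of $W$ has degree at most $s<2^{m-2}$, so your cleaning deletes all of $W$, and after that the extra vectors as well. More generally, the output you ask for is at least $2^m-O(s)$ survivors of minimum degree at least $2^{m-2}$. That would force $e(G)\ge(1-o(1))2^{2m-3}\gg s2^{m-1}$, so it can never exist under your hypothesis. The sensible cleaning is the reverse: remove the fewer than $4s$ vertices of degree at least $2^{m-2}$. Even that still yields no large independent set.

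Step 3 is correct, and it shows exactly what must be supplied. You need a maximal totally isotropic $W$ with $|\mathcal{A}\cap W|\ge 2^{m-1}+s$. Then $d\le 2^{m-1}-s$, and
$(s+d)(2^{m-1}-d)-s2^{m-1}=d(2^{m-1}-s-d)\ge 0$.
Establishing such a $W$ requires using the linear-algebraic structure of the odd-pair graph, not merely its sparsity. For example, the non-neighbourhood of $x$ in $\mathcal{A}$ is the hyperplane section $\mathcal{A}\cap x^{\perp}$, or one could use spectral information. That is the substantive part of the cited result, and your proposal leaves it unproved.
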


Our main result is that we construct a family of even-sized subsets $\mathcal{A}$ with $|\mathcal{A}|= 2^{\lfloor \frac{n}{2} \rfloor}+s$ and op$(\mathcal{A})= s\cdot 2^{\lfloor \frac{n}{2} \rfloor-1}$, $1\le s\le 2^{\lfloor \frac{n}{2} \rfloor}-2$. We also give a construction using symmetric designs to prove that when $k$ is even and $4k-1$ is a prime power, there exists a collection of $2^{\lfloor\frac{4k-1}{2}\rfloor}+s$ even-sized subsets of a $4k-1$ element  set $\mathcal{A}_s$ with  $op(\mathcal{A}_s)=s \cdot 2^{{\lfloor\frac{4k-1}{2}\rfloor}-1}$,  $1\leq s\leq4k-1$.

\begin{theorem}\label{Th}
Let $n$, $s$ be positive integers, and fix $1\leq s \leq 2^{\lfloor\frac{n}{2}\rfloor}-2$, there is a family of even-sized subsets $\mathcal{A}\subset 2^{\left[n\right]}$ with $|\mathcal{A}|= 2^{\lfloor \frac{n}{2} \rfloor}+s$ and $op(\mathcal{A})= s\cdot 2^{\lfloor \frac{n}{2} \rfloor-1} $.
\end{theorem}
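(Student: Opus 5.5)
The plan is to work over $\mathbb{F}_2$, identifying subsets of $[n]$ with their characteristic vectors, so that $|A\cap B|$ is odd exactly when $A\cdot B=1$. Write $m=\lfloor \frac{n}{2}\rfloor$. If $m=1$ the range of $s$ is empty, so assume $m\ge 2$. Let
\[
\mathcal{E}=\mathrm{span}\{e_{2i-1}+e_{2i}: 1\le i\le m\}
\]
be the standard extremal eventown. It consists of the $2^m$ sets that are unions of the pairs $\{2i-1,2i\}$. All its members have even size and pairwise even intersections. I will take $\mathcal{A}=\mathcal{E}\cup\mathcal{B}$, where $\mathcal{B}$ is a set of $s$ extra even-sized vectors chosen so that:
\begin{itemize}
\item[(i)] no vector of $\mathcal{B}$ lies in $\mathcal{E}^{\perp}$;
\item[(ii)] the vectors of $\mathcal{B}$ are pairwise orthogonal.
\end{itemize}

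Counting odd pairs is then easy. For $v\notin \mathcal{E}^\perp$, the map $E\mapsto v\cdot E$ is a nonzero linear functional on $\mathcal{E}$. Hence $v$ has odd intersection with exactly $2^{m-1}$ members of $\mathcal{E}$. Pairs inside $\mathcal{E}$ contribute nothing, and by (ii) pairs inside $\mathcal{B}$ contribute nothing. So $op(\mathcal{A})=s\cdot 2^{m-1}$. Condition (i) also forces $\mathcal{B}\cap\mathcal{E}=\emptyset$, since $\mathcal{E}\subseteq\mathcal{E}^\perp$. Therefore $|\mathcal{A}|=2^m+s$.

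Next I identify the even-weight part of $\mathcal{E}^\perp$.
\begin{itemize}
\item If $n=2m$, then $\mathcal{E}$ has dimension $m$, so $\mathcal{E}^\perp=\mathcal{E}$.
\item If $n=2m+1$, then $\mathcal{E}^\perp$ consists of the vectors that are constant on each pair $\{2i-1,2i\}$, with coordinate $n$ free. Even weight forces coordinate $n$ to be $0$.
\end{itemize}
In both cases the even-weight vectors of $\mathcal{E}^\perp$ are exactly $\mathcal{E}$. So condition (i) for an even-weight vector reduces to $v\notin\mathcal{E}$.

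To build $\mathcal{B}$, use the shifted pairing on the first $2m$ coordinates:
\[
\mathcal{F}=\mathrm{span}\{e_2+e_3,\,e_4+e_5,\,\dots,\,e_{2m-2}+e_{2m-1},\,e_{2m}+e_1\}.
\]
Its generators have disjoint supports, so $\mathcal{F}$ is self-orthogonal and all its vectors have even weight. Thus any subset of $\mathcal{F}$ satisfies (ii). A vector in $\mathcal{E}\cap\mathcal{F}$ is constant on each pair $\{2i-1,2i\}$ and also on each pair $\{2i,2i+1\}$, with indices taken cyclically on $[2m]$. It is therefore constant on all of $[2m]$, so it is $0$ or the indicator of $[2m]$. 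Both of these do lie in $\mathcal{E}\cap\mathcal{F}$. Hence $|\mathcal{F}\setminus\mathcal{E}|=2^m-2$. Choosing any $s\le 2^m-2$ vectors of $\mathcal{F}\setminus\mathcal{E}$ as $\mathcal{B}$ satisfies (i) and (ii), which proves the statement.

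The only delicate step is finding a large family of pairwise orthogonal even vectors that avoids $\mathcal{E}$, together with the exact count $2^m-2$. The cyclic shift of the pairing handles this directly, and the intersection computation above is the one point to verify carefully, including the odd-$n$ case where coordinate $n$ is never used.
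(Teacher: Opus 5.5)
Your proof is correct. Its skeleton is the paper's: the base family is all unions of pairs of one 1-factor, and the $s$ extra sets are unions of pairs of a second 1-factor, excluding the two unions the spans share ($\emptyset$ and the full union). The difference is in how the two key facts are proved.

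The paper uses different second 1-factors for $n\equiv 2$ and $n\equiv 0 \pmod 4$. It proves disjointness from $\mathcal{A}_0$ by a case analysis on smallest elements (Lemmas~\ref{LA} and~\ref{TLA}: covering a union of $m$ pairs of $\mathcal{B}$ needs at least $m+1$ pairs of $\mathcal{A}$). It then counts odd pairs in Theorems~\ref{ThA} and~\ref{ThB}: take the smallest covering set $T$, note that $|A\cap B|$ is odd iff $A$ contains an odd number of the $2w-2m\ge 1$ pairs split by $B$, and sum binomial coefficients. Odd $n$ is handled by ignoring the last point.

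You replace the case analysis with the observation that your two matchings together form a Hamiltonian cycle on $[2m]$. That is, in effect, what those lemmas establish for the paper's matchings. You replace the binomial count with the fact that a nonzero linear functional on $\mathcal{E}$ equals $1$ on exactly half of $\mathcal{E}$. The paper's condition ``at least one split pair'' is exactly your condition $v\notin\mathcal{E}^{\perp}$.

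Your route is uniform in $n$, much shorter, and isolates the actual mechanism. Any self-orthogonal family of even vectors outside $\mathcal{E}$ works, and any second matching forming a single cycle with the first supplies $2^m-2$ such vectors. The paper's version stays purely combinatorial, but pays for that with an intricate case analysis.
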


 \begin{theorem}\label{SD}
  When $k$ is even and $4k-1$ is a prime power, there exists a family of even-sized subsets $\mathcal{A}_s$ with $|\mathcal{A}_s|=2^{\lfloor\frac{4k-1}{2}\rfloor}+s$, $1\leq s\leq4k-1$, and $op(\mathcal{A}_s)=s \cdot 2^{{\lfloor\frac{4k-1}{2}\rfloor}-1}$.
 \end{theorem}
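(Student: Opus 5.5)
The plan is to start from a maximal eventown family that is a linear subspace of $\mathbb{F}_2^{n}$, with $n=4k-1$ and $\lfloor n/2\rfloor=2k-1$. To it we add $s$ further even-sized sets. These come from the complements of blocks of a Hadamard symmetric design. Each added set will have pairwise even intersections with the other added sets, and will meet exactly half of the base family in an odd number of points.

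\emph{Step 1 (the design).} Since $4k-1$ is a prime power and $4k-1\equiv 3 \pmod 4$, the Paley construction gives a symmetric $(4k-1,2k-1,k-1)$ design. Its points are $GF(q)$ with $q=4k-1$, and its blocks are $D_t=Q+t$, where $Q$ is the set of nonzero squares. Let $C_t=\overline{D_t}$. Then $|C_t|=2k$ is even, and for $t\neq t'$
\[
|C_t\cap C_{t'}| = n-2(2k-1)+(k-1) = k,
\]
which is even because $k$ is even. Thus $\{C_t\}$ is a family of $4k-1$ even sets with pairwise even intersections. We will take $\mathcal{A}_s=\mathcal{E}\cup\{C_{t_1},\dots,C_{t_s}\}$ for a suitable $\mathcal{E}$.

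\emph{Step 2 (the base family and the count).} Choose a partition of $GF(q)\setminus\{0\}$ into $2k-1$ pairs $\{x,-x\}$. Let $\mathcal{E}$ be the set of all unions of these pairs. This is an eventown family of size $2^{2k-1}$, and it is a subspace $E$ of dimension $2k-1$. Its orthogonal complement is $E^\perp=E\oplus\langle \mathbf 1\rangle$, namely the unions of pairs, possibly together with the point $0$.

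For an even vector $v\notin E^\perp$, the map $x\mapsto v\cdot x$ is a nonzero linear functional on $E$. Hence $v$ has odd intersection with exactly $2^{2k-2}$ members of $\mathcal{E}$. Moreover $v\notin\mathcal{E}$, so the union is a family of exactly $2^{2k-1}+s$ distinct sets. Because the added sets pairwise have even intersections, we would get
\[
op(\mathcal{A}_s)=s\cdot 2^{2k-2}=s\cdot 2^{\lfloor\frac{4k-1}{2}\rfloor-1}.
\]

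\emph{Step 3 (main obstacle).} It remains to show that no $C_t$ lies in $E^\perp$. This is equivalent to showing that $D_t\setminus\{0\}$ is never a union of pairs $\{x,-x\}$, i.e.\ that $D_t\setminus\{0\}$ is not closed under negation.

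For $t=0$ this is immediate: $-1$ is a nonsquare, so $Q\cap(-Q)=\emptyset$. For $t\neq 0$, I would argue by contradiction. Suppose $(Q+t)\setminus\{0\}=(-Q-t)\setminus\{0\}=(N-t)\setminus\{0\}$, where $N$ is the set of nonsquares. Counting elements of $Q\cap(N-2t)$ via the standard cyclotomic numbers of order $2$ should contradict this equality, since these counts are about $q/4$ rather than about $q/2$.

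If this direct computation becomes messy, I would fall back on a different pairing: either a pairing chosen greedily, or a random pairing with a union bound. Note that any two points are separated by exactly $2k$ blocks. Every block is split by some pair exactly when no $D_t\setminus\{p\}$ is a union of pairs, and only $4k-1$ blocks must be handled. This Step 3 is where I expect the real work to lie; Steps 1 and 2 are routine linear algebra over $\mathbb{F}_2$ together with the design parameters.
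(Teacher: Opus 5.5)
Your proof is correct and has the same skeleton as the paper's construction. The base family is all unions of a perfect matching on $4k-2$ of the points, with one point left over. To it you add $s$ blocks of a $(4k-1,2k,k)$ symmetric design, whose pairwise intersections have even size $k$.

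You differ from the paper in three ways.
\begin{itemize}
\item You fix a concrete design and labeling: the complement of the Paley design on $GF(q)$, matched by $x\leftrightarrow -x$, with $0$ left over. Construction~\ref{Sc} instead places an arbitrary $(4k-1,2k,k)$ design on $[4k-1]$ with the matching $\{2i-1,2i\}$.
\item You count odd pairs with the nonzero functional $x\mapsto v\cdot x$ on $E$. The paper takes the smallest union $T\supseteq B_i$ and sums $\binom{|T\setminus B_i|}{j}$ over odd $j$; this is the same computation.
\item The substantive gain is your Step~3. The paper's count gives $2^{2k-2}$ only if $T\setminus B_i\neq\emptyset$, i.e.\ only if $B_i$ splits some pair $A_j$. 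Lemma~\ref{Sdl} establishes only $w\ge k$ and never excludes $w=k$, which is the case where a block avoiding the point $4k-1$ is a union of $k$ matching pairs. Under an arbitrary identification of the points with $[4k-1]$ this really occurs: relabel so that some block avoiding $4k-1$ becomes $\{1,\dots,2k\}$. That block then already lies in $\mathcal{A}_0$ and creates no odd pairs.
\end{itemize}
Your explicit labeling together with Step~3 supplies exactly this missing nondegeneracy. The trade-off is that your argument is tied to the Paley design, whereas the paper's formulation, once repaired, would apply to any $(4k-1,2k,k)$ design. For the theorem as stated, with $4k-1$ a prime power, nothing is lost.

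You can finish Step~3 without cyclotomic numbers and without the greedy or random fallback:
\begin{itemize}
\item Since $0=-0$, the set $D_t\setminus\{0\}$ is closed under negation if and only if $D_t=-D_t$.
\item Because $-Q=N$, this means $Q+t=N-t$, i.e.\ $Q+2t=N$. Here $2t\neq 0$ when $t\neq 0$, and for $t=0$ it would say $Q=N$, which is absurd.
\item $Q+2t=N$ forces $Q\cap(Q+2t)=\emptyset$. This contradicts the difference-set property $|Q\cap(Q+u)|=k-1\ge 1$ for $u\neq 0$.
\end{itemize}
This is where $k\ge 2$ is used; for $q=3$ one indeed has $Q+1=N$.

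Work with the exact equality $D_t=-D_t$. If you only compare the sets with $0$ removed, you get $|Q\cap(N-2t)|\ge 2k-2$. The actual value of $|Q\cap(N-2t)|$ lies in $\{k-1,k\}$, so for $k=2$ there is no contradiction.
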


\section{Supersaturation of eventown}

In this section, we focus on  supersaturation problems of eventown. Both of our constructions are based on  $1$-factors of $K_n$ with vertex set $[n]$.  A factor of a graph $G$ is a subgraph of $G$ including all vertices of $G$ (a spanning subgraph). A $1$-factor is a factor that is regular of degree $1$.

The first case to be considered is  $n=4k+2$.
 Let $A_i=\left\{i+1, i+2k+2\right\}$ and $B_i=\left\{2i+1,2i+2\right\}$, $0\le i\le 2k$. Then $\mathcal{A}=\left\{A_i: 0\le i\le 2k\right\}$ and $\mathcal{B}=\left\{B_i: 0\le i\le 2k\right\}$ are two  $1$-factors of $K_n$.

\begin{lemma}\label{LA}
 Let $1\leq m\le 2k$, $0\le j_1<j_2<...<j_m\le 2k$, $0\le i_1<i_2<...<i_w\le 2k$. If $\bigcup\limits_{u=1}^mB_{j_u}\subseteq \bigcup\limits_{v=1}^wA_{i_v}$ holds, then $w\ge m+1$.
\end{lemma}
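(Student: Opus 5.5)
The idea is to view the two $1$-factors $\mathcal{A}$ and $\mathcal{B}$ as one graph $G=\mathcal{A}\cup\mathcal{B}$ on $[4k+2]$. I would first show that $G$ is a single Hamiltonian cycle. The lemma then follows from a counting argument together with a closure argument.

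\textbf{Step 1 ($G$ is one cycle).} Since $G$ is the union of two perfect matchings, it is a disjoint union of even cycles that alternate between $\mathcal{A}$-edges and $\mathcal{B}$-edges. The edge $A_i$ joins $x$ and $x+2k+1$ for $1\le x\le 2k+1$, and $B_i$ joins an odd vertex $v$ to $v+1$. Start at an odd vertex $v$, take its $\mathcal{B}$-edge and then the next $\mathcal{A}$-edge; this lands on another odd vertex. If $v+1\le 2k+1$ it lands on $v+1+(2k+1)=v+2k+2$. Otherwise it lands on $v+1-(2k+1)=v-2k$. In both cases the new vertex is $v+2k+2 \pmod{4k+2}$. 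Write the odd vertices as $v=2t+1$ with $t\in\mathbb{Z}_{2k+1}$. The map is then $t\mapsto t+k+1$. Since $2k+1=2(k+1)-1$, we have $\gcd(k+1,2k+1)=1$, so this map is a single $(2k+1)$-cycle. Hence every component of $G$ that contains an odd vertex is the whole vertex set, and $G$ is one cycle of length $4k+2$.

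\textbf{Step 2 (counting and closure).} Put $V=\bigcup_{u=1}^m B_{j_u}$, so $|V|=2m$ because the $B_j$ are disjoint. Put $W=\bigcup_{v=1}^w A_{i_v}$, so $|W|=2w$. The hypothesis $V\subseteq W$ gives $w\ge m$. Suppose, for contradiction, that $w=m$. Then $|V|=|W|$, so $V=W$. This means $V$ is a union of $\mathcal{B}$-edges and also a union of $\mathcal{A}$-edges, so no edge of $G$ leaves $V$. Therefore $V$ is a union of connected components of $G$. But $1\le m\le 2k$ gives $0<|V|=2m<4k+2$. This contradicts Step 1, because the only nonempty union of components is all of $[4k+2]$. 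Hence $w\ge m+1$.

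The only real work is Step 1: checking the modular arithmetic of the alternating walk, including the wrap-around case, and the coprimality $\gcd(k+1,2k+1)=1$. Step 2 is routine once Step 1 is in place.
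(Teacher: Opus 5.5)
Your proof is correct, but it takes a different route from the paper's.

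\textbf{The paper's route.} It also reduces to the case $w=m$, $A=B$. It then compares the two sets element by element in increasing order. The lower endpoints $i_v+1\le 2k+1$ of the chosen $A_{i_v}$ all come before the upper endpoints, while the elements of $B$ come in consecutive pairs $2j+1,2j+2$. For even $m$ this forces $i_{2l-1}=2j_l$ and $i_{2l}=i_{2l-1}+1$. The $(m+1)$-st smallest element is then odd on the $B$ side ($2j_{m/2+1}+1$) but even on the $A$ side ($i_1+2k+2=2j_1+2k+2$). Odd $m$ is handled by passing to complements, which are unions of $2k+1-m$ pairs, an even number.

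\textbf{Your route.} You replace this case analysis with one structural fact: $\mathcal{A}\cup\mathcal{B}$ is a Hamiltonian cycle. This follows from the rotation $t\mapsto t+k+1$ on $\mathbb{Z}_{2k+1}$ together with $\gcd(k+1,2k+1)=1$. After that, $V=W$ would make $V$ a proper nonempty union of components, which is impossible.

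\textbf{What each buys.} Your argument is uniform in $m$ and needs no parity split or complement trick. It also isolates exactly what the lemma says. For two $1$-factors on $2N$ points, the conclusion $w\ge m+1$ for all $1\le m\le N-1$ is equivalent to their union being connected, since any component of a disconnected union is a counterexample with $w=m$. So your method carries over to any other pair of $1$-factors once connectivity of their union is checked. The paper's sorted-order argument is tied to the specific labelling. As its $n=4k$ analogue (Lemma~\ref{TLA}) shows, it tends to split into many subcases.
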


\noindent{\it Proof:} Let $A=\bigcup\limits_{v=1}^wA_{i_v}$ and $B=\bigcup\limits_{u=1}^mB_{j_u}$. It is easy to see that $|A|=2w$ and $|B|=2m$ since $\mathcal{A}$ and $\mathcal{B}$ are all $1$-factors of $K_n$. So we have $w\ge m$ since $B\subseteq A$. Thus we only need to prove $w\not =m$. Seeking a contradiction, suppose that $w=m$ and $A=B$.  We distinguish two cases.

(1) $m$ is even. Since $i_1+1$, $2j_1+1$ are the smallest elements of $A$ and $B$, respectively, then $i_1+1=2j_1+1$, i.e. $i_1=2j_1$. Next, $i_2+1$ and $2j_1+2$ are the smallest elements of $P^1=A\setminus\{i_1+1\}$ and $Q^1=B\setminus\{2j_1+1\}$, respectively, so $i_2=2j_1+1$, thus $i_2=i_1+1$. Furthermore, $i_3+1$ and $2j_2+1$ are the smallest elements of $P^2=P^1\setminus\{i_2+1\}$ and $Q^2=Q^1\setminus\{2j_1+2\}$, respectively, so $i_3=2j_2$. And so on, we have $i_{2k-1}=2j_k$ and $i_{2k}=i_{2k-1}+1$, $k=1,2,...,\frac{m}{2}$.
 Now, the smallest element of $Q^{m}=B\setminus\{2j_1+1,2j_1+2,2j_2+1,2j_2+2,...,2j_{\frac{m}{2}}+1,2j_{\frac{m}{2}}+2\}$ is $2j_{{\frac{m+2}{2}}}+1$ which is odd, however, the smallest element in $P^{m}=A\setminus\{i_1+1,i_2+1,...,i_m+1\}$ is $i_1+2k+2=2j_1+2k+2$ which is even, a contradiction.

(2) $m$ is odd.  Since $w=m$ and $A=B$, then we have $\bar{A} = [4k+2]\setminus A=  [4k+2]\setminus B= \bar{B}$ and both of these consist the union of $(2k+1)-m$ sets.
Thus, if $m$ is odd, then  $(2k+1)-m$ is even. We recover a contradiction in the same way as in case (1).\qed

Now we give the construction of a family of even-sized subsets with $|\mathcal{A}_s|= 2^{\lfloor \frac{n}{2} \rfloor}+s$. Let $\mathcal{A}_0=\{\bigcup\limits_{T_i\in T}T_i:T\in 2^{\mathcal{A}}\}$ and $\mathcal{B}_0=\{\bigcup\limits_{T_i\in T} T_i: T\in 2^\mathcal{B},\ T\neq \emptyset,\ T\neq \mathcal{B}\}$.  Let $\mathcal{A}_s=\mathcal{A}_0\cup \mathcal{D}$, where  $\mathcal{D}\subseteq \mathcal{B}_0$ and $|\mathcal{D} |=s$, $1\leq s \leq 2^{\lfloor\frac{n}{2}\rfloor}-2$.

\begin{theorem}\label{ThA}
Let $n\equiv 2 \pmod 4$. There exists  a family of even-sized subsets $\mathcal{A}_s$ with $|\mathcal{A}_s|= 2^{\lfloor \frac{n}{2} \rfloor}+s$ and $op(\mathcal{A}_s)= s\cdot 2^{\lfloor \frac{n}{2} \rfloor-1} $.
\end{theorem}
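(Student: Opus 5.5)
The plan is to verify directly that the family $\mathcal{A}_s=\mathcal{A}_0\cup\mathcal{D}$ defined above has the required properties. Throughout, $n=4k+2$, so $\lfloor \frac{n}{2}\rfloor=2k+1$. There are four things to check: the members are even-sized, the family has exactly $2^{2k+1}+s$ distinct members, no pair inside $\mathcal{A}_0$ or inside $\mathcal{D}$ is odd, and each $D\in\mathcal{D}$ forms exactly $2^{2k}$ odd pairs with $\mathcal{A}_0$.

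First, the routine facts. Since $\mathcal{A}$ is a $1$-factor with $2k+1$ edges, the $2^{2k+1}$ subsets $T\in 2^{\mathcal{A}}$ give pairwise distinct unions. Hence $|\mathcal{A}_0|=2^{2k+1}$. Every member of $\mathcal{A}_0$ or $\mathcal{B}_0$ is a disjoint union of $2$-sets, so it is even-sized. The intersection of two unions of edges of the same $1$-factor is again a union of edges of that factor, so it is even. Thus $op$ vanishes on pairs inside $\mathcal{A}_0$ and on pairs inside $\mathcal{D}$. We also have $|\mathcal{B}_0|=2^{2k+1}-2$, so a subfamily $\mathcal{D}$ of any size $s\le 2^{\lfloor n/2\rfloor}-2$ exists.

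To get $|\mathcal{A}_s|=2^{2k+1}+s$ we need $\mathcal{D}\cap\mathcal{A}_0=\emptyset$. Suppose some $D=\bigcup_{u=1}^m B_{j_u}$, with $1\le m\le 2k$, equals a union $\bigcup_{v=1}^w A_{i_v}$. Comparing sizes gives $2m=2w$, so $w=m$. But Lemma \ref{LA} forces $w\ge m+1$, a contradiction.

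The main step is counting the odd pairs between a fixed $D\in\mathcal{D}$ and $\mathcal{A}_0$. For $X=\bigcup_{A_i\in T}A_i$ we have $|X\cap D|=\sum_{A_i\in T}|A_i\cap D|$, and each term lies in $\{0,1,2\}$. Let $O=\{A_i: |A_i\cap D|=1\}$. Then $|X\cap D|$ is odd exactly when $|T\cap O|$ is odd.

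The key point is that $O\neq\emptyset$. If $O$ were empty, every $A_i$ would lie inside $D$ or be disjoint from it. Then $D$ would be a union of $m$ edges of $\mathcal{A}$, which is the situation we just excluded using Lemma \ref{LA}.

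Once $O$ is nonempty, fix some $A^*\in O$. Toggling $A^*$ in and out of $T$ is a bijection between the choices of $T$ giving odd parity and those giving even parity. So exactly half of the $2^{2k+1}$ sets $T$, namely $2^{2k}=2^{\lfloor n/2\rfloor-1}$, yield an odd intersection with $D$. Summing over the $s$ members of $\mathcal{D}$ gives $op(\mathcal{A}_s)=s\cdot 2^{\lfloor n/2\rfloor-1}$.

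The only nontrivial ingredient is the use of Lemma \ref{LA}: once to show the two parts of $\mathcal{A}_s$ are disjoint, and once to show $O\ne\emptyset$. The rest is parity bookkeeping.
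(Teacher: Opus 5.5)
Your proposal is correct and is essentially the paper's proof. Your set $O$ is exactly the paper's collection of edges $A_{i_h}$ meeting $D$ in one point (there are $2w-2m$ of them for the minimal cover $T$), and you use Lemma~\ref{LA} in the same two places. The remaining differences are cosmetic: you get $O\neq\emptyset$ directly from $\mathcal{D}\cap\mathcal{A}_0=\emptyset$ rather than via $w\ge m+1$, and you count with a toggling involution instead of the binomial sum $\sum_{i\ \mathrm{odd}}\binom{2w-2m}{i}2^{2k+1-(2w-2m)}$.
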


\noindent{\it Proof:}
It follows that all subsets of $[n]$ in $\mathcal{A}_0$ and $\mathcal{B}_0$ are even-sized since both $\mathcal{A}$ and $\mathcal{B}$ are 1-factors. So $\mathcal{A}_s$ is  a family of even-sized subsets.
By Lemma \ref{LA}, we know that $\mathcal{A}_0\cap \mathcal{D}=\emptyset$. Hence, the size of $\mathcal{A}_s$ is $2^{\mid\mathcal{A}\mid}+s=2^{2k+1}+s=2^{\lfloor\frac{n}{2}\rfloor}+s$.

Next, we prove that the odd pair number of $\mathcal{A}_s$ is exactly $s\cdot 2^{2k}$(i.e. $s\cdot 2^{\lfloor \frac{n}{2} \rfloor-1}$). It's easy to check that no two elements of $\mathcal{A}_0$ or $\mathcal{D}$ can have odd-sized intersections by our construction. Thus it suffices to count the odd pair number between $\mathcal{A}_0$ and $\mathcal{D}$.

Now we show that for any $B\in \mathcal{B}_0$,   $op(\mathcal{A}_0\cup \{B\})=2^{2k}$. Suppose  $B$ is the union of  $m$ ($1\leq m\leq 2k$) pairs of $\mathcal{B}$, i.e. $B=\bigcup\limits_{u=1}^mB_{j_u}$.
Let $T=\bigcup\limits_{v=1}^wA_{i_v}$ be the unique smallest set in $\mathcal{A}_0$ such that $B\subseteq T$. By Lemma \ref{LA} we have $m+1\leq w\leq 2m$. Thus there are exactly $2w-2m$ elements, say $x_1,x_2,...,x_{2w-2m}$, in $E=T\setminus B$, and $|A_l\cap E|\le 1$ for any $0\le l\le 2k$. W.l.g, we may assume $x_h\in A_{i_h}$.
Further, let $F=\bigcup\limits_{h=1}^{2w-2m}(A_{i_h}\setminus \{x_h\})$. Then for any $A\in \mathcal{A}_0$,  $|A\cap B|$ is odd if and only if $|A\cap F|$ is odd.
So we have
\begin{equation*}
\begin{split}
op(\mathcal{A}_0\cup \{B\})&= \sum_{\substack{1\leq i\leq 2w-2m\\ i\ is\ odd}} \dbinom{2w-2m}{i}\cdot 2^{2k+1-(2w-2m)}\\
                 &= 2^{2k+1-(2w-2m)} \sum_{\substack{1\leq i\leq 2w-2m\\ i\ is\ odd}} \dbinom{2w-2m}{i}\\
                 &= 2^{2k+1-(2w-2m)}\cdot 2^{2w-2m-1}\\
                 &=2^{2k}.
\end{split}
\end{equation*}
Thus, we have  $op(\mathcal{A}_s)=s\cdot 2^{2k}$.\qed

Similarly, we can give the construction when $n=4k$.
Let $A_i=\{i+1,2k+i+1\}, 0\leq i\leq 2k-1$,
$B_0=\{2k+1, 4k\}$, $B_j=\{2j+1,2j+2\}$, $0\leq j\leq k-1$, and $B_{j}^{'}=\{2j+2k,2j+2k+1\}$, $1\leq j \leq k-1$. Denote $\mathcal{B}_1=\{B_j: 1\leq j\leq k\}$ and $\mathcal{B}_2=\{B_{j}^{'}:1\leq j\leq k-1\}$. Then  $\mathcal{A}=\{A_i: 0\leq i\leq 2k-1\}$ and $\mathcal{B}=B_0\cup \mathcal{B}_1\cup \mathcal{B}_2$ are two $1$-factors of $K_n$.

Let $\E$ and $\F$ be subsets of $\mathcal{B}_1$ and $\mathcal{B}_2$, respectively, with $|\E|=m_1$ and $|\F|=m_2$.
Let $E=\bigcup\limits_{B_j\in \E}B_j$ and $F=\bigcup\limits_{B'_{j}\in \F}B'_{j}$.
It's certain when $m_1=0$ and $E \cup F \subseteq \bigcup\limits_{v=1}^wA_{i_v}$ holds, there is $w\ge m_2+1$. So as $m_2=0$, there is $w\ge m_1+1$.

\begin{lemma}\label{TLA}
Let $1\leq m\leq 2k-1$, $0\le j_1<j_2<...<j_{m_1}\le k-1$, $1\leq j'_1<j'_2<...<j'_{m_2} \le k-1$ and $0\le i_1<i_2<...<i_w\le 2k-1$. Denote $E=\bigcup\limits_{u=1}^{m_1}B_{j_u}$, $F=\bigcup\limits_{u=1}^{m_2}B^{'}_{j'_u}$, we have

$(1)$ If $m_1+m_2=m$ and $E \cup F \subseteq \bigcup\limits_{v=1}^wA_{i_v}$ holds, then $w\ge m+1$.

$(2)$ If $m_1+m_2=m-1$ and $B_0\cup E \cup F \subseteq \bigcup\limits_{v=1}^wA_{i_v}$ holds, then $w\ge m+1$.
\end{lemma}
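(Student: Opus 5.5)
The plan is to argue structurally through the graph $G=\mathcal{A}\cup\mathcal{B}$ on vertex set $[4k]$, the union of the two $1$-factors. This avoids redoing the smallest-element bookkeeping of Lemma \ref{LA}. In both parts, let $A=\bigcup_{v=1}^wA_{i_v}$ and let $B$ be the union of the chosen $B$-pairs. These are $E\cup F$ in part $(1)$ and $B_0\cup E\cup F$ in part $(2)$. In both cases $B$ is a union of exactly $m$ distinct edges of $\mathcal{B}$, with $1\le m\le 2k-1$. Since $\mathcal{A}$ and $\mathcal{B}$ are $1$-factors, $|A|=2w$ and $|B|=2m$, so $B\subseteq A$ gives $w\ge m$. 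It then suffices to rule out $w=m$, which would force $A=B$.

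The key step is to show that $G$ is a single Hamiltonian cycle of length $4k$. Each $A_i$ joins a ``left'' vertex $x\in\{1,\dots,2k\}$ to the ``right'' vertex $x+2k$. Identify each right vertex $2k+y$ with the label $y$. On the left, $\mathcal{B}$ pairs $\{1,2\},\{3,4\},\dots,\{2k-1,2k\}$. On the right, after relabelling, $\mathcal{B}$ pairs $\{2,3\},\{4,5\},\dots,\{2k-2,2k-1\}$ (the sets $B'_j$) together with $\{1,2k\}$ (the set $B_0$). Start at left $1$ and alternate: $B$-edge to left $2$, $A$-edge to right $2$, $B$-edge to right $3$, $A$-edge to left $3$, $B$-edge to left $4$, and so on. The walk moves along the labels $1,2,\dots,2k$, switching sides at each $A$-edge. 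After covering all labels on both sides, the right-side $B$-edge $\{2k,1\}$ (that is, $B_0$) closes it up at label $1$. I will check carefully that the side on which we reach label $2k$ makes the closing edge land on the correct vertex. By the alternation, after label $2k$ the walk is on the side opposite to where it started, so closing via right $2k\to$ right $1$ and then $A_0$ back to left $1$ works. The result is one cycle through all $4k$ vertices.

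Given this, suppose $A=B$. Then the vertex set $A$ is closed under both matchings: every vertex of $A$ has its $\mathcal{A}$-partner in $A$ by construction, and its $\mathcal{B}$-partner in $A$ because $A=B$. Hence $A$ is a union of vertex sets of connected components of $G$. Since $G$ is connected, $A=\emptyset$ or $A=[4k]$. This contradicts $1\le m\le 2k-1$, because then $0<|B|=2m<4k$. Therefore $w\ge m+1$ in both $(1)$ and $(2)$; the two parts differ only in which $m$ edges of $\mathcal{B}$ are used.

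The main obstacle is verifying the single-cycle claim, in particular the parity and closure of the walk at the end where $B_0$ is used. I would first check it on $k=1,2$ and then write the general alternating walk explicitly. I would also double-check the index ranges of $\mathcal{B}_1$ in the statement, where $j$ runs from $0$ to $k-1$, so that $\mathcal{B}$ really is the stated $1$-factor.
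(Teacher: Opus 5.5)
Your proof is correct, and it takes a genuinely different route from the paper's.

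\textbf{Your cycle claim holds.} The two matchings form a single Hamiltonian $4k$-cycle: the blocks $2j-1,\,2j,\,2k+2j,\,2k+2j+1$ for $j=1,\dots,k-1$, followed by $2k-1,\,2k,\,4k,\,2k+1$, closed back to $1$ by $A_0$. So a set that is both a union of $\mathcal{A}$-pairs and a union of $\mathcal{B}$-pairs must be $\emptyset$ or $[4k]$. This rules out $w=m$ whenever $1\le m\le 2k-1$.

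\textbf{How the paper argues instead.} It works element by element. Assuming $A=B$, it matches the smallest remaining elements of $A$ and $B$ in turn until it reaches a parity or range contradiction. This splits part $(1)$ into six cases, according to the parity of $m$ and how $m_1$ compares with $m/2$. Part $(2)$ is then reduced to part $(1)$ by removing $A_0$ and $A_{2k-1}$, which are forced by $B_0=\{2k+1,4k\}$.

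\textbf{What your approach buys.}
\begin{itemize}
\item Parts $(1)$ and $(2)$ become the same statement: $B$ is any union of $m$ distinct pairs of the $1$-factor $\mathcal{B}$.
\item No case analysis is needed.
\item The same check also proves Lemma~\ref{LA}, since the two matchings there likewise form one Hamiltonian cycle.
\item It avoids a weak point in the paper's part $(2)$. The paper asserts that $E\cup F$ lies in the union of the remaining $w-2$ pairs. This fails when $E$ contains $\{1,2\}$ or $\{2k-1,2k\}$, because $1\in A_0$ and $2k\in A_{2k-1}$. Your argument needs no such reduction.
\item It is insensitive to the paper's overloaded notation for $B_0$. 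Your reading of $\mathcal{B}$ as the left pairs $\{1,2\},\dots,\{2k-1,2k\}$, the pairs $B'_j$, and $B_0=\{2k+1,4k\}$ is the right one.
\end{itemize}

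In exchange, the paper's method points to the exact element where the two decompositions disagree, but it is longer and contains several index slips.
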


\noindent{\it Proof:}
%\begin{description}
(1) Let $A=\bigcup\limits_{v=1}^wA_{i_v}$ and $B=E \cup F$. Observe that the elements of $E$ are all smaller than the elements of $F$. It is easy to see that $|A|=2w$ and $|B|=|E\cup F|=2(m_1+m_2)=2m$, since $\mathcal{A}$ and $\mathcal{B}$ are all $1$-factors of $K_n$. So we have $w\ge m$ since $B\subseteq A$. Thus we only need to prove $w\not =m$. Otherwise, $w=m$ and $A=B$. Here, we distinguish it into two cases.
%\begin{description}

$\mathbf{Case~ 1}$ $m$ is even.
%\begin{itemize}

$\mathbf{Case~ 1.1}$ $m_1<\frac{m}{2}$.

 Since $i_1+1$, $2j_1+1$ are the smallest elements of $A$ and $B$, respectively, then $i_1+1=2j_1+1$, i.e. $i_1=2j_1$. Next, $i_2+1$ and $2j_1+2$ are the smallest elements of $P^1=A\setminus\{i_1+1\}$ and $Q^1=B\setminus\{2j_1+1\}$, respectively, so $i_2=2j_1+1$, thus $i_2=i_1+1$. Furthermore, $i_3+1$ and $2j_2+1$ are the smallest elements of $P^2=P^1\setminus\{i_2+1\}$ and $Q^2=Q^1\setminus\{2j_1+2\}$, respectively, so $i_3=2j_2$. And so on, we have $i_{2l-1}=2j_l$ and $i_{2l}=i_{2l-1}+1$, $l=1,2,...,m_1$.

 Now, the smallest elements of $Q^{2m_1}=B\setminus\{2j_1+1,2j_1+2,2j_2+1,2j_2+2,...,2j_{m_1}+1,2j_{m_1}+2\}$ and $P^{2m_1}=A\setminus\{i_1+1,i_2+1,...,i_{2m_1-1}+1,i_{2m_1}+1\}$ are $2j'_1+2k$ and $i_{2m_1+1}+1$. So, $i_{2m_1+1}+1=2j'_1+2k$.
 Since $i_{2m_1+1}\le 2k-1$, then $i_{2m_1+1}+1\le 2k<2j'_1+2k$ , a contradiction.

$\mathbf{Case~ 1.2}$ $m_1=\frac{m}{2}$.

 Similar to case 1, there is  $i_{2l-1}=2j_l$ and $i_{2l}=i_{2l-1}+1$, $l=1,2,...,\frac{m}{2}$. Now, the smallest element in $B\setminus\{2j_1+1,2j_1+2,2j_2+1,2j_2+2,...,2j_{\frac{m}{2}}+1,2j_{\frac{m}{2}}+2\}=B\setminus E=F$ is $2j'_1+2k$ which is even. However, the smallest element of $A\setminus\{i_1+1,i_2+1,...,i_m+1\}$ is $i_1+2k+1=2j_1+2k-1$ which is odd, a contradiction.

$\mathbf{Case~ 1.3}$ $m_1>\frac{m}{2}$.

By Case 1.2, there is  $i_{2l-1}=2j_l$ and $i_{2l}=i_{2l-1}+1$, $l=1,2,...,\frac{m}{2}$. Now, the smallest element in $Q^{m}=B\setminus\{2j_1+1,2j_1+2,2j_2+1,2j_2+2,...,2j_{\frac{m}{2}}+1,2j_{\frac{m}{2}}+2\}$ is $2j_{\frac{m}{2}+1}+1$. And the smallest element of $P^{m}=A\setminus\{i_1+1,i_2+1,...,i_m+1\}$ is $i_1+2k+1$, so $ i_1+2k+1=2j_{\frac{m}{2}+1}+1$. However, $i_1+2k+1>2k$, $2j_{\frac{m}{2}+1}+1\le 2k$ , a contradiction.

% \end{itemize}

$\mathbf{Case~ 2}$ $m$ is odd.

$\mathbf{Case~ 2.1}$ $m_1<\frac{m-1}{2}$.

Similar to the case 1.1.

$\mathbf{Case~ 2.2}$ $m_1=\frac{m-1}{2}$.

Similar to the case 2.2, we have $i_{2l-1}=2j_l$ and $i_{2k}=i_{2k-1}+1$, $l=1,2,...,\frac{m-1}{2}$. Now, the smallest element in $B\setminus\{2j_1+1,2j_1+2,2j_2+1,2j_2+2,...,2j_{\frac{m-1}{2}}+1,2j_{\frac{m-1}{2}}+2\}$ is $2j'_1+2k$. And the smallest element of $A\setminus\{i_1+1,i_2+1,...,i_{m-1}+1\}$ is $i_m+1$, so $i_m+1=2j'_1+2k$ which implies $i_m-2k=2j'_1-1>0$. Since $i_m\leq 2k-1$, we have  a contradiction.

$\mathbf{Case~ 2.3}$ $m_1>\frac{m-1}{2}$.

Similar to the case 1.3, we have $i_{2k-1}=2j_k-2$ and $i_{2l}=i_{2l-1}$, $l=1,2,...,\frac{m-1}{2}$.
Now, the smallest elements in $Q^{m-1}=B\setminus\{2j_1+1,2j_1+2, 2j_2+1,2j_2+2,...,2j_{\frac{m-1}{2}}+1,2j_{\frac{m-1}{2}}+2\}$ and $P^{m-1}=A\setminus\{i_1+1,i_2+1,...,i_{m-1}+1\}$ are $2j_{\frac{m+1}{2}}+1$ and $i_m+1$, so $i_m=2j_{\frac{m+1}{2}}$.
Next, $2j_{\frac{m+1}{2}}+2$ and $i_1+2k+1$ are the smallest elements of $Q^{m-1}\setminus\{2j_{\frac{m+1}{2}}+1\}$ and $P^{m-1}\setminus\{i_m+1\}$. Then, we have $i_1+2k+1=2j_{\frac{m+1}{2}}+2$ is even. However, by $i_1=2j_1$, we have that $i_1+2k+1=2j_1+2k+1$ which is odd, a contradiction.

(2) Let $A=\bigcup\limits_{v=1}^wA_{i_v}$ and $B=B_0\cup E\cup F$. At first, if $m=1$, the condition equals to $B_0\subseteq A$. Besides, it's easy to see the elements of $B_0=\{2k,4k\}$ has already appeared in $A_0\cup A_{2k-1}$, which means $w=2\geq 1+1$. The statement is true.
On the other hand, when $m>1$, since $B_0\subseteq A$, there is $A_0, A_{2k-1}\subseteq A$. The condition equals to $E\cup F \subseteq A\setminus\{A_0,A_{2k-1}\}$ with $m_1+m_2=m-1$. Moreover, we have $w-2\geq m-1$ by $(1)$, thus $w\geq m+1$.
\qed

Let  $\mathcal{A}_0=\{\bigcup\limits_{T_i\in T}T_i:T\in 2^{\mathcal{A}}\}$ and $\mathcal{B}_0=\{\bigcup\limits_{T_i\in T} T_i: T\in 2^\mathcal{B},\ T\neq \emptyset,\ T\neq \mathcal{B}\}$. Then, we can construct a family of even-sized subsets $\mathcal{A}_s=\mathcal{A}_0\cup \mathcal{D}$, where  $\mathcal{D}\subseteq \mathcal{B}_0$ and $|\mathcal{D} |=s$, $1\leq s \leq 2^{\lfloor\frac{n}{2}\rfloor}-2$.

\begin{theorem}\label{ThB}
Let $n\equiv0 \pmod 4$. There exists  a family of even-sized subsets $\mathcal{A}_s$ with $|\mathcal{A}_s|= 2^{\lfloor \frac{n}{2} \rfloor}+s$ and $op(\mathcal{A}_s)= s\cdot 2^{\lfloor \frac{n}{2} \rfloor-1} $.
\end{theorem}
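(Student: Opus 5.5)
The proof will follow the pattern of Theorem \ref{ThA}, with Lemma \ref{TLA} taking the place of Lemma \ref{LA}. Here $n=4k$ and $\lfloor n/2\rfloor=2k$.

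\textbf{Step 1: even sizes and the count.} Every member of $\mathcal{A}_0$ and of $\mathcal{B}_0$ is a union of disjoint pairs from a $1$-factor, so it has even size. Next we check that $\mathcal{A}_0\cap\mathcal{D}=\emptyset$. Take any $B\in\mathcal{B}_0$; it is a union of $m$ pairs of $\mathcal{B}$ with $1\le m\le 2k-1$. If $B_0$ is not among these pairs, then $B=E\cup F$ with $m_1+m_2=m$. If $B_0$ is among them, then $B=B_0\cup E\cup F$ with $m_1+m_2=m-1$. In either case Lemma \ref{TLA} says that any union of $w$ pairs of $\mathcal{A}$ containing $B$ has $w\ge m+1$, so $B$ itself is not such a union. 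Distinct subsets $T$ of $\mathcal{A}$ give distinct unions, so $|\mathcal{A}_0|=2^{2k}$ and $|\mathcal{A}_s|=2^{2k}+s$.

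\textbf{Step 2: only cross pairs are odd.} Any two members of $\mathcal{A}_0$ meet in a union of whole $\mathcal{A}$-pairs, so the intersection is even. The same argument works for two members of $\mathcal{D}$ using $\mathcal{B}$. So it remains to show that each $B\in\mathcal{D}$ has exactly $2^{2k-1}$ odd intersections with members of $\mathcal{A}_0$.

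\textbf{Step 3: counting for a fixed $B$.} Let $T=\bigcup_{v=1}^w A_{i_v}$ be the smallest member of $\mathcal{A}_0$ containing $B$, i.e.\ the union of all $A_i$ meeting $B$. By Step 1, $w\ge m+1$, so $B\ne T$. Let $I$ be the set of indices $i$ with $|A_i\cap B|=1$. Since $|T|=2w$ and $|B|=2m$, we get $|I|=2w-2m\ge 2$.

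For $A=\bigcup_{i\in S}A_i\in\mathcal{A}_0$ we have
\[ |A\cap B|\equiv |S\cap I| \pmod 2, \]
because each $A_i$ with $i\notin I$ meets $B$ in $0$ or $2$ elements. The choice of $S$ outside $I$ is free, giving $2^{2k-|I|}$ options. Inside $I$ we need an odd-sized subset, giving $2^{|I|-1}$ options since $|I|\ge1$. Hence
\[ op(\mathcal{A}_0\cup\{B\})=2^{2k-1}, \]
matching the binomial computation in Theorem \ref{ThA}, and summing over $\mathcal{D}$ gives $op(\mathcal{A}_s)=s\cdot 2^{2k-1}$.

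\textbf{Main obstacle.} The step that needs care is $I\neq\emptyset$. This is exactly the statement that $B$ is not a union of $\mathcal{A}$-pairs, i.e.\ that $B\notin\mathcal{A}_0$. We must check that the two cases of Lemma \ref{TLA} (whether or not $B_0$ is used) cover every nonempty proper sub-union of $\mathcal{B}$, so that $1\le m\le 2k-1$ always holds and the lemma's hypotheses are met.
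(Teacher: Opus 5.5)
Your proof is correct and follows the paper's argument. Lemma \ref{TLA}, split by whether $B_0$ is used, gives $B\notin\mathcal{A}_0$ and hence $|I|=2w-2m\ge 2$. Your parity identity $|A\cap B|\equiv|S\cap I|\pmod 2$ is the paper's reduction to the set $F$, and it yields $2^{2k-|I|}\cdot 2^{|I|-1}=2^{2k-1}$ odd pairs for each member of $\mathcal{D}$. Your write-up is slightly cleaner than the paper's in isolating $I\neq\emptyset$ as the only property the count needs.
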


\noindent{\it Proof:} It follows that all subsets of $[n]$ in $\mathcal{A}_0$ and $\mathcal{B}_0$ are even-sized since both $\mathcal{A}$ and $\mathcal{B}$ are 1-factors. So $\mathcal{A}_s$ is  a family of even-sized subsets.
By Lemma \ref{TLA}, we know that $\mathcal{A}_0\cap \mathcal{D}=\emptyset$. Hence, the size of $\mathcal{A}_s$ is $2^{\mid\mathcal{A}\mid}+s=2^{2k}+s=2^{\lfloor\frac{n}{2}\rfloor}+s$.

Next, we prove that the odd pair number of $\mathcal{A}_s$ is exactly $s\cdot 2^{2k-1}$(i.e. $s\cdot 2^{\lfloor \frac{n}{2} \rfloor-1}$). It's easy to check that no two elements of $\mathcal{A}_0$ or $\mathcal{D}$ can have odd-sized intersections by our construction. Thus it suffices to count the odd pair number between $\mathcal{A}_0$ and $\mathcal{D}$.

At first, we will show that for any $B\in \mathcal{B}_0$, there is $op(\mathcal{A}_0\cup \{B\})=2^{2k-1}$. W.l.g, we may suppose  $B$ is the union of  $m$ ($1\leq m\leq 2k$) pairs of $\mathcal{B}$. Let $T=\bigcup\limits_{v=1}^wA_{i_v}$ be the unique smallest set in $\mathcal{A}_0$ such that $B\subseteq T$.

(1) If $B_0\subset B$, we may assume $B=B_0\cup (\bigcup\limits_{u=1}^{m_1}B_{j_u})\cup (\bigcup\limits_{u=1}^{m_2}B_{j'_u})$, where $m_1+m_2=m-1$.
By Lemma \ref{TLA}, we have  $m+1\leq w\leq 2m$.
Thus there are $2w-2m$ elements, say $x_1,x_2,.., x_{2w-2m}$, in $E=T\setminus B$, and $|A_l\cap E|\le 1$ for any $0\le l\le 2k-1$. W.l.g, we may assume $x_h\in A_{i_h}$.
Further, let $F=\bigcup\limits_{h=1}^{2w-2m}(A_{i_h}\setminus \{x_h\})$. Then for any $A\in \mathcal{A}_0$,  $|A\cap B|$ is odd if and only if $|A\cap F|$ is odd.
So we have
\begin{equation*}
\begin{split}
op(\mathcal{A}_0\cup \{B\})&= \sum_{\substack{1\leq i\leq 2w-2m\\ i\ is\ odd}} \dbinom{2x-2m}{i}\cdot 2^{2k-(2w-2m)}\\
                 &= 2^{2k-(2w-2m)}\cdot 2^{2w-2m-1}\\
                 &=2^{2k-1}.
\end{split}
\end{equation*}
(2) If $B_0\nsubseteq B$, we may assume $B=(\bigcup\limits_{u=1}^{m_1}B_{j_u})\cup (\bigcup\limits_{u=1}^{m_2}B_{j'_u})$, where $m_1+m_2=m$. Let $A=\bigcup\limits_{v=1}^wA{i_v}$, and when $B\subset A$, there is $m+1\leq w\leq 2m$ by Lemma \ref{TLA}. Similar to $(1)$, we have
$$op(\mathcal{A}_0\cup \{B\})= \sum_{\substack{1\leq i\leq 2w-2m\\ i\ is\ odd}} \dbinom{2x-2m}{i}\cdot 2^{2k-(2w-2m)}=2^{2k-1}.$$
Finally, we have $op(\mathcal{A}_s)=s\cdot 2^{2k-1}$.\qed

\noindent{\it \underline{Proof of Theorem \ref{Th}:}}
When $n\equiv 1 \pmod 4$, suppose $n=4k+1$ and we can get 1-factor of $K_{4k}$ by deleting the last element $4k+1$. By Theorem \ref{ThB} there exists a family of even-sized subsets $\mathcal{A}_s$ with $|\mathcal{A}_s|=2^{\lfloor \frac{4k+1}{2} \rfloor}+s$ and $op(\mathcal{A}_s)= s\cdot 2^{\lfloor \frac{4k+1}{2} \rfloor-1} $. On the other hand, for the case $n\equiv 3 \pmod 4$, let $n=4k+3$ and we can delete the last element $4k+3$ to get a $1$-factor of $K_{4k+2}$. Hence, there exists a family of even-sized subsets $\mathcal{A}_s$ with $|\mathcal{A}_s|=2^{\lfloor \frac{4k+3}{2} \rfloor}+s$ and $op(\mathcal{A}_s)= s\cdot 2^{\lfloor \frac{4k+3}{2} \rfloor-1}$ by Theorem \ref{ThA}. Combining those above, Theorem \ref{Th} is proved.\qed

\section{Eventown via symmetric design}

In this section we give a construction using symmetric designs. For definitions in combinatorial designs, one can refer to \cite{MR0002086} and \cite{MR0006834}. Here, we introduce some useful notations and then present the example.

\begin{definition}
A balanced incomplete block design (BIBD) is a pair $(V, \mathcal{B})$ where $V$ is a $v$-set(point set) and $\mathcal{B}$ is a collection of $b$ $k$-subsets of  $V$ (blocks) such that  any 2-subset of $V$ is contained in exactly $\lambda$ blocks.
Furthermore, if $b=v$ it is called symmetric and denoted by $(v, k, \lambda)$-SBIBD.
\end{definition}

\begin{lemma}\label{SP}(\cite{MR2246267})
In a $(v, k, \lambda)$-SBIBD, every two distinct blocks have $\lambda$ points in common.
\end{lemma}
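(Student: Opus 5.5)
Lemma \ref{SP} is the classical statement that in a symmetric design any two distinct blocks meet in exactly $\lambda$ points. I would prove it by the standard incidence-matrix argument. Let $N$ be the $v\times v$ incidence matrix with rows indexed by points and columns by blocks, and $N_{x,B}=1$ iff $x\in B$.

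\emph{Step 1: replication number and the product $NN^T$.} Counting pairs (point, block containing it) gives $vr=bk$. Fix a point $x$ and count pairs $(y,B)$ with $y\neq x$ and $x,y\in B$. This gives $r(k-1)=\lambda(v-1)$, so every point lies in the same number $r$ of blocks. Since $b=v$, we get $r=k$. The fact that every pair of points is covered $\lambda$ times then says
\[
NN^T=(k-\lambda)I+\lambda J ,
\]
where $J$ is the all-ones matrix.

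\emph{Step 2: $N$ is invertible.} The matrix $(k-\lambda)I+\lambda J$ has eigenvalues $k-\lambda+\lambda v=k+\lambda(v-1)=k+r(k-1)=k^2$ (once) and $k-\lambda$ (with multiplicity $v-1$). Here $k>\lambda$ because the design is incomplete: $k<v$, so $\lambda(v-1)=k(k-1)<k(v-1)$. Hence $\det(NN^T)\neq 0$. Since $N$ is square, $N$ itself is invertible.

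\emph{Step 3: compute $N^TN$.} Every column of $N$ sums to $k$ (block size), and every row sums to $r=k$. So $JN=kJ$ and $NJ=kJ$, which gives $N^{-1}J=k^{-1}J$. Now
\[
N^TN=N^{-1}(NN^T)N=N^{-1}\bigl((k-\lambda)I+\lambda J\bigr)N=(k-\lambda)I+\lambda N^{-1}JN .
\]
The last term simplifies as $N^{-1}JN=kN^{-1}J=k\cdot k^{-1}J=J$. Therefore $N^TN=(k-\lambda)I+\lambda J$. The $(B,B')$ off-diagonal entry of $N^TN$ is exactly $|B\cap B'|$, so any two distinct blocks share exactly $\lambda$ points.

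The only delicate point is Step 2, namely the invertibility of $N$. It needs $r=k$ from Step 1 together with the strict inequality $k>\lambda$, which holds because the design is nontrivial ($k<v$). The degenerate case $k=v$ is excluded by the word ``incomplete''.
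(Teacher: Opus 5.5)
The paper gives no proof of this lemma; it quotes it from the Handbook of Combinatorial Designs. So your proposal supplies a self-contained argument where the paper has only a citation. Your argument is the standard Ryser-type incidence-matrix proof, and it is correct. The steps are: $NN^T=(k-\lambda)I+\lambda J$; then $\det(NN^T)=k^2(k-\lambda)^{v-1}\neq 0$; then $NJ=JN=kJ$, which is exactly where $b=v$ enters through $r=k$, forces $N^TN=NN^T$.

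One presentational fix. The identity $vr=bk$ presupposes that $r$ is constant, so do the pair count first. That count gives $r_x(k-1)=\lambda(v-1)$ for each point $x$, and dividing needs $k\ge 2$. This is part of the usual definition of a BIBD and implicit in the paper's. Only then conclude $r=k$ from $vr=vk$.

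If you want to avoid the invertibility step you flagged as delicate, there is a purely counting route. Fix a block $B$ and let $x_i=|B\cap B_i|$ for the other $v-1$ blocks. Double counting gives $\sum_i x_i=k(r-1)=k(k-1)=\lambda(v-1)$ and $\sum_i x_i(x_i-1)=k(k-1)(\lambda-1)$. Hence $\sum_i x_i^2=\lambda^2(v-1)$ and $\sum_i (x_i-\lambda)^2=0$. This uses only $r=k$. It needs neither $k>\lambda$ nor the exclusion of $k=v$; that case is in any event trivially true, since all blocks equal the point set and $\lambda=v$.

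The matrix proof is nonetheless the textbook standard. The same computation of $\det(NN^T)$ also yields Fisher's inequality and leads into Bruck--Ryser--Chowla. Either argument is a valid replacement for the citation.
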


 \begin{theorem}\label{PM}(\cite{MR2246267})
If $4k-1$ is a prime power, then a $(4k-1, 2k, k)$-SBIBD exists.
\end{theorem}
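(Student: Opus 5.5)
We would prove Theorem \ref{PM} with the classical Paley difference set construction, and then pass to complements.

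Let $q=4k-1$ be a prime power and work in the additive group of the field $\mathbb{F}_q$. Let $Q$ be the set of nonzero squares, so $|Q|=(q-1)/2=2k-1$. The first step is to show that $Q$ is a $(q,2k-1,k-1)$ difference set. For $d\neq 0$, let $\lambda(d)$ be the number of ordered pairs $(x,y)\in Q\times Q$ with $x-y=d$. We record two symmetries.
\begin{itemize}
\item For $a\in Q$, the map $(x,y)\mapsto(ax,ay)$ is a bijection of $Q\times Q$, so $\lambda(ad)=\lambda(d)$. Hence $\lambda$ is constant on $Q$ and constant on the set of nonsquares.
\item The map $(x,y)\mapsto(y,x)$ shows $\lambda(-d)=\lambda(d)$.
\end{itemize}
Since $q\equiv 3\pmod 4$, the element $-1$ is a nonsquare. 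So $d\mapsto -d$ swaps squares and nonsquares, and $\lambda$ equals a single constant $\lambda$ on all of $\mathbb{F}_q^*$. Counting ordered pairs of distinct elements of $Q$ gives $|Q|(|Q|-1)=\lambda(q-1)$, which yields $\lambda=(q-3)/4=k-1$. We expect this step, specifically the use of $q\equiv 3\pmod 4$ to make $\lambda$ uniform, to be the only nontrivial point.

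Next we develop the difference set. Take the $q$ blocks $Q+g$ for $g\in\mathbb{F}_q$. These are distinct, since $Q+g=Q$ with $g\neq 0$ would force $|Q|\cdot 0\equiv$ a nonzero sum in a way incompatible with $|Q|<q$; alternatively, distinctness follows from the design count below. For two distinct points $u,v$, the number of $g$ with $u,v\in Q+g$ equals the number of representations of $u-v$ as a difference of elements of $Q$, namely $k-1$. This gives a $(4k-1,2k-1,k-1)$-SBIBD on $V=\mathbb{F}_q$.

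Finally we take complements: the blocks are $V\setminus(Q+g)$, each of size $q-(2k-1)=2k$. For distinct points $u,v$, inclusion–exclusion counts the blocks of the original design missing both points as
\[
b-2r+\lambda=(4k-1)-2(2k-1)+(k-1)=k,
\]
where $b=4k-1$ is the number of blocks and $r=2k-1$ is the replication number. Hence the complementary blocks form a symmetric $(4k-1,2k,k)$-SBIBD, as claimed.
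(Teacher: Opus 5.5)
Your proof is correct. The paper gives no proof of this theorem and only cites the Handbook of Combinatorial Designs. What you supply is the classical Paley construction behind that citation: the nonzero squares of $\mathbb{F}_q$, $q=4k-1$, form a $(4k-1,2k-1,k-1)$ difference set, and the complementary development is a $(4k-1,2k,k)$-SBIBD. The paper's $(7,4,2)$ example with base block $\{0,1,4,6\}$ is the translate by $1$ of $\mathbb{Z}_7\setminus\{1,2,4\}$, so it is your construction for $k=2$.

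The one weak spot is your first justification that the translates $Q+g$ are distinct. For a proper prime power $q=p^m$, the inequality $|Q|<q$ does not by itself rule out $|Q|\,g=0$. What you need is $p\nmid |Q|$, which holds because $2|Q|=q-1\equiv -1 \pmod p$.

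Your alternative argument works and is cleaner. If $Q+d=Q$ with $d\neq 0$, then $d=(x+d)-x$ for every $x\in Q$, so $\lambda(d)=2k-1\neq k-1$. In any case, the paper's definition only asks for a collection of blocks, so distinctness is not essential.
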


First, we list an example constructed by a $(7,4,2)$-SBIBD. Let $A_i=\{2i-1,2i\}$ and $B_i=\{ i, i+1, i+4, i+6\}\pmod 7$, then $\mathcal{A}=\{A_i: 1\leq i\leq3\}$ is a 1-factor of $K_6$ and $(\mathbb{Z}_7,\mathcal{B})$ is a  $(7,4,2)$-SBIBD, where $\mathcal{B}=\{B_i: i\in \mathbb{Z}_7\}$. Now, we can give the construction of a family of even-sized subsets $\mathcal{A}_s=\mathcal{A}_0\cup \mathcal{D}$ with $|\mathcal{A}_s|= 2^{\lfloor \frac{7}{2} \rfloor}+s$, where $\mathcal{A}_0=\{\bigcup\limits_{T_i\in T}T_i:T\in 2^{\mathcal{A}}\}$,  $\mathcal{D}\subseteq \mathcal{B}$ and $|\mathcal{D}|=s$, $1\leq s \leq 7$. It's easy to check that no two elements of $\mathcal{A}_0$ or $\mathcal{D}$ can have odd-sized intersections by our construction. Thus it suffices to count the odd pair number between $\mathcal{A}_0$ and $\mathcal{D}$.
Moreover, there is $op(\mathcal{A}_0\cup B_i)=4$ for any block $B_i$, $i\in \mathbb{Z}_7$. Therefore, the odd pair number of $\mathcal{A}_s$ is $s\cdot 2^{{\lfloor \frac{7}{2} \rfloor}-1}$. Observe that when $s=7$, we get a family of even-sized subsets with size $8+7=15$ and odd pair number is $7\cdot 2^{{\lfloor \frac{7}{2} \rfloor}-1}$=$28$.

Inspired by this example, we attempt to generalize it by the $(4k-1, 2k, k)$-SBIBD when $k$ is even. Finally, we get a family of even-sized subsets $\mathcal{A}_s$, with $|\mathcal{A}_s|=2^{\lfloor\frac{4k-1}{2}\rfloor}+s$ where $1\leq s\leq4k-1$. Here, we state that below.

\begin{construction}\label{Sc}
Let $A_i=\{2i-1, 2i\}$, $1\leq i\leq 2k-1$, and $\mathcal{A}=\{A_i: 1\leq i\leq 2k-1\}$ be a 1-factor of $K_{4k-2}$. Let $\mathcal{A}_0=\{\bigcup\limits_{T_i\in T}T_i:T\in 2^{\mathcal{A}}\}$ with $|\mathcal{A}_0|=2^{\lfloor\frac{4k-1}{2}\rfloor}$.  Let $\mathcal{B}$ be the block set of a symmetric $(4k-1, 2k, k)$-design with the point set $[4k-1]$. Then, we get a family of even-sized subsets $\mathcal{A}_s=\mathcal{A}_0 \cup  \mathcal{D}$ with $|\mathcal{A}_s|= 2^{\lfloor \frac{4k-1}{2} \rfloor}+s$, where $\mathcal{D}\subseteq \mathcal{B}$ and $|\mathcal{D}|=s$, $1\leq s \leq 4k-1$.

\end{construction}

\begin{lemma}\label{Sdl}
Let $k$ be even. $op(\mathcal{A}_0\cup B_i)=2^{{\lfloor \frac{4k-1}{2} \rfloor}-1}$, where $\mathcal{A}_0$ and $B_i$ are construed in Construction~\ref{Sc}.
\end{lemma}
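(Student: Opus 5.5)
The plan is to reduce the count to a parity computation over the $2k-1$ pairs $A_1,\dots,A_{2k-1}$. Every $A\in\mathcal{A}_0$ has the form $A=\bigcup_{i\in T}A_i$ for a unique $T\subseteq[2k-1]$, and then
\[
|A\cap B_i|=\sum_{j\in T}|A_j\cap B_i|\equiv |\{j\in T:\ |A_j\cap B_i|=1\}| \pmod 2 .
\]
Call a pair $A_j$ \emph{split} by $B_i$ if $|A_j\cap B_i|=1$, and let $S$ be the set of split indices, with $t=|S|$.

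Then $|A\cap B_i|$ is odd exactly when $|T\cap S|$ is odd. If $t\ge 1$, we may choose $T\cap S$ as any odd-sized subset of $S$, which gives $2^{t-1}$ choices, and $T\setminus S$ freely, which gives $2^{2k-1-t}$ choices. Hence
\[
op(\mathcal{A}_0\cup B_i)=2^{t-1}\cdot 2^{2k-1-t}=2^{2k-2}=2^{\lfloor\frac{4k-1}{2}\rfloor-1}.
\]
This is the same binomial-sum computation as in the proofs of Theorem \ref{ThA} and Theorem \ref{ThB}. So the whole lemma reduces to showing $t\ge 1$, i.e.\ that $B_i$ is not a union of pairs from $\mathcal{A}$.

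There are two cases, according to whether $B_i$ contains the point $4k-1$, which is not covered by $\mathcal{A}$.

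\textbf{Case $4k-1\in B_i$.} Then $|B_i\cap[4k-2]|=2k-1$ is odd. A union of pairs has even size, so some pair must be split, giving $t\ge1$ immediately.

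\textbf{Case $4k-1\notin B_i$.} This is the main obstacle. Here $B_i\subseteq[4k-2]$ has even size $2k$, and we must exclude that $B_i$ equals a union of exactly $k$ of the pairs. I would first try a counting argument using the parameters $r=2k$, $\lambda=k$ and Lemma \ref{SP}. Suppose $B_i=\bigcup_{j\in J}A_j$ with $|J|=k$.
\begin{itemize}
\item Each block $B'\ne B_i$ meets $B_i$ in exactly $k$ points. Since $k$ is even, this intersection size is even; this is where the hypothesis on $k$ enters.
\item For each pair $A_j$ with $j\in J$, there are $\lambda=k$ blocks containing both of its points. So exactly $2(r-\lambda)=2k$ blocks split $A_j$, and all of them differ from $B_i$.
\end{itemize}
I would then double count the incidences (block $B'\ne B_i$, pair $A_j\subseteq B_i$ split by $B'$), and compare with the parity constraint $|B'\cap B_i|=k\equiv 0$. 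The aim is a contradiction, for instance that the number of split pairs inside $B_i$ for each $B'$ must be even, which should clash with a total count of $k\cdot 2k$ distributed over $4k-2$ blocks.

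If this counting does not close, the fallback is to use the freedom in Construction \ref{Sc} to choose the identification of the design's points with $[4k-1]$. Since the 1-factor $\mathcal{A}$ is ours to choose, we relabel so that the missing point $4k-1$ and the pairing are placed such that no block avoiding $4k-1$ is a union of pairs. As a sanity check, I would verify against the $(7,4,2)$ example given earlier in the paper.

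Once $t\ge1$ is established in both cases, the formula above gives $op(\mathcal{A}_0\cup B_i)=2^{\lfloor\frac{4k-1}{2}\rfloor-1}$.
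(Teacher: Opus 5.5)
Your reduction to the number $t$ of pairs split by $B_i$, with the count $2^{t-1}\cdot 2^{2k-1-t}=2^{2k-2}$, is the paper's computation; its $T\setminus B_i$ consists of one point from each split pair. Your case $4k-1\in B_i$ is correct. The gap is the case $4k-1\notin B_i$, and your main plan there cannot work: for an arbitrary identification of the design's points with $[4k-1]$, the lemma is false.

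Take $k=2$ and the complement of the Fano plane with lines $\{5,6,7\},\{1,2,7\},\{3,4,7\},\{1,3,5\},\{2,4,5\},\{1,4,6\},\{2,3,6\}$. This is a $(7,4,2)$-SBIBD on $[7]$, and its three blocks avoiding $7$ are $\{1,2,3,4\}$, $\{3,4,5,6\}$ and $\{1,2,5,6\}$. Each is a union of two pairs $A_j$, so $t=0$ and $op(\mathcal{A}_0\cup\{B\})=0$; these blocks even lie in $\mathcal{A}_0$. Your double count is consistent with this: the $k\cdot 2k=8$ incidences are distributed as $0,0,2,2,2,2$ over the other six blocks, so no parity contradiction exists. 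Also, the parity of $k$ plays no role in this lemma; it is only needed so that blocks in $\mathcal{D}$ meet evenly.

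So your fallback is forced. The construction must fix the labeling, and you must prove that a good labeling exists, which you do not do. It is easy:
\begin{itemize}
\item Let $\infty$ be the point that will become $4k-1$. The $2k-1$ blocks avoiding $\infty$ are $2k$-subsets of the other $4k-2$ points.
\item A fixed $2k$-set is a union of edges in exactly $(2k-1)!!\,(2k-3)!!$ of the $(4k-3)!!$ perfect matchings of those points.
\item We have $(2k-1)\cdot(2k-1)!!\,(2k-3)!!<(4k-3)!!$ for every $k\ge 2$. At $k=2$ this reads $9<15$, and passing from $k$ to $k+1$ multiplies the left side by $(2k+1)^2$ and the right side by $16k^2-1$.
\item Hence some perfect matching has none of these blocks as a union of its edges. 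Label its edges $\{2i-1,2i\}$.
\end{itemize}
With this step added, your proof is complete.

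The paper's proof has the same hole. It only derives $w\ge k$ and then uses $\sum_{i\ \mathrm{odd}}\binom{2w-2k}{i}=2^{2w-2k-1}$, which fails when $w=k$. It also writes $|T\setminus B_i|=2w-2k$ when $4k-1\in B_i$, where the correct value is $2w-2k+1$.
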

 \noindent{\it Proof:}
  Let $1\leq w\leq 2k-1$. For any $B_i\in \mathcal{B}$, let $T=\bigcup\limits_{v=1}^wA_{i_v}$ be the unique smallest set in $\mathcal{A}_0$ such that $B_i\subseteq T$. Since $|B_i\cap A_i|\leq 2$ and $|B_i|=2k$, if $4k-1\notin B_i$ and $B_i\subseteq T$, then  $w\ge k$. If $4k-1\in B_i$, then $B_i\subseteq T$ is equivalent to  $B_i\setminus\{4k-1\} \subseteq T$, which also implies $w\ge k$. So we have $k\leq w\leq 2k-1$. Thus there are exactly $2w-2k$ elements, say $x_1,x_2,...,x_{2w-2k}$, in $E=T\setminus B_i$, and $|A_l\cap E|\le 1$ for any $1\le l\le 2k-1$. W.l.g, we may assume $x_h\in A_{i_h}$.

Further, let $F=\bigcup\limits_{h=1}^{2w-2k}(A_{i_h}\setminus \{x_h\})$. Then for any $A\in \mathcal{A}_0$,  $|A\cap B_i|$ is odd if and only if $|F\cap B_i|$ is odd.
So we have
\begin{equation*}
\begin{split}
op(\mathcal{A}_0\cup \{B_i\})&= \sum_{\substack{1\leq i\leq 2w-2k\\ i\ is\ odd}} \dbinom{2w-2k}{i}\cdot 2^{2k-1-(2w-2k)}\\
                 &= 2^{2k-1-(2w-2k)} \sum_{\substack{1\leq i\leq 2w-2k\\ i\ is\ odd}} \dbinom{2w-2k}{i}\\
                 &= 2^{2k-1-(2w-2k)}\cdot 2^{2w-2k-1}\\
                 &=2^{{\lfloor \frac{4k-1}{2} \rfloor}-1}.
\end{split}
\end{equation*}
\qed

\noindent{\it \underline{Proof of Theorem \ref{SD}:}} Combining Construction~\ref{Sc}, Theorem~\ref{PM}, and Lemmas~\ref{SP} and~\ref{Sdl}, we have proven Theorem~\ref{SD}.\qed

\noindent{\bf Acknowledgments}\
This work was supported by the National Natural Science Foundation of China under Grant
Nos.12471313, 12071226 (Haitao Cao), and 12101268 (Xiaolei Niu).

\bibliographystyle{plain}

\begin{thebibliography}{10}

\bibitem{MR0249303}
E. R. Berlekamp.
\newblock On subsets with intersections of even cardinality.
\newblock {\em Canad. Math. Bull.}, 12(4):471--474, 1969.

\bibitem{MR2246267}
C. J. Colbourn and J. H. Dinitz, editors.
\newblock {\em Handbook of combinatorial designs}.
\newblock Discrete Mathematics and its Applications (Boca Raton). Chapman \&
  Hall/CRC, Boca Raton, FL, second edition, 2007.

\bibitem{MR0729786}
M. Deza, P. Frankl, and N. M. Singhi.
\newblock On functions of strength {$t$}.
\newblock {\em Combinatorica}, 3(3-4):331--339, 1983.

\bibitem{MR0140419}
P.~Erd\H{o}s, Chao Ko, and R.~Rado.
\newblock Intersection theorems for systems of finite sets.
\newblock {\em Quart. J. Math. Oxford Ser. (2)}, 12:313--320, 1961.

\bibitem{MR0002086}
R. A. Fisher.
\newblock An examination of the different possible solutions of a problem in incomplete blocks.
\newblock {\em Ann. Eugenics}, 10:52--75, 1940.

\bibitem{MR0725068}
P. Frankl and A. M. Odlyzko.
\newblock On subsets with cardinalities of intersections divisible by a fixed integer.
\newblock {\em Europ. J. Combin.}, 4(3):215--220, 1983.

\bibitem{MR3656338}
P. Frankl.
\newblock A stability result for families with fixed diameter.
\newblock {\em Combin. Probab. Comput.}, 26(4):506--516, 2017.

\bibitem{MR3838458}
D. Gerbner and B.~Patk\'{o}s.
\newblock {\em Extremal finite set theory}.
\newblock Discrete Mathematics and its Applications (Boca Raton). CRC Press,
  Boca Raton, FL, 2019.

\bibitem{MR0376400}
J. Graver.
\newblock Boolean designs and self-dual matroids.
\newblock {\em Linear Algebra Appl.}, 10:111--128, 1975.

\bibitem{JO2023}
 G. Johnston and J. O'Neill.  A few new oddtown and eventown problems. arXiv preprint arXiv:2312.13588.

\bibitem{MR0200179}
D. J. Kleitman.
\newblock On a combinatorial conjecture of {E}rd\H{o}s.
\newblock {\em J. Combinatorial Theory}, 1:209--214, 1966.

\bibitem{MR0006834}
F. W. Levi.
\newblock {\em Finite {G}eometrical {S}ystems}.
\newblock University of Calcutta, Calcutta, 1942.

\bibitem{MR4564468}
J. O'Neill.
\newblock A short note on supersaturation for oddtown and eventown.
\newblock {\em Discret. Appl. Math.}, 334:63--67, 2023.

\bibitem{MR4429337}
J. O'Neill and J.~Verstra\"{e}te.
\newblock A note on {$k$}-wise oddtown problems.
\newblock {\em Graphs Combin.}, 38(3):101, 2022.

\bibitem{MR1343352}
H. S. Snevily.
\newblock A generalization of the {R}ay-{C}haudhuri-{W}ilson theorem.
\newblock {\em J. Combin. Des.}, 3(5):349--352, 1995.

\bibitem{MR1544925}
E. Sperner.
\newblock Ein {S}atz\"{u}ber {U}ntermengen einer endlichen {M}enge.
\newblock {\em Math. Z.}, 27(1):544--548, 1928.

\bibitem{MR3755659}
B. Sudakov and P. Vieira.
\newblock Two remarks on eventown and oddtown problems.
\newblock {\em SIAM J. Discrete Math.}, 32(1):280--295, 2018.

\bibitem{MR2191021}
T. Szab\'{o} and V. Vu.
\newblock Exact {$k$}-wise intersection theorems.
\newblock {\em Graphs Combin.}, 21(2):247--261, 2005.

\bibitem{MR1451492}
V. Vu.
\newblock Extremal systems with upper-bounded odd intersections.
\newblock {\em Graphs Combin.}, 13(2):197--208, 1997.

\bibitem{MR1773658}
V. Vu.
\newblock Extremal set systems with weakly restricted intersections.
\newblock {\em Combinatorica}, 19(4):567--588, 1999.

\bibitem{Wei2023OnSF}
X. Wei, Y. Zhao, X. Zhang, and G. Ge.
\newblock On supersaturation for oddtown and eventown.
\newblock arXiv preprint arXiv:2302.05586.


\bibitem{Wei2025DCC}
X. Wei, X. Zhang, and G. Ge.
\newblock On set systems with strongly restricted intersections.
\newblock {\em Des. Codes  Cryptogr. } 93:667--682, 2025.

\end{thebibliography}

\end{sloppypar}
\end{document}